\newtheorem{theorem}{Theorem}[section]
\newtheorem{lemma}[theorem]{Lemma}
\newtheorem{definition}[theorem]{Definition}
\newtheorem{proposition}[theorem]{proposition}
\title{Existence and Uniqueness of Local Solutions for a Class of Partial Differential-Algebraic Equations}
\author{
  Seyyid Ali Benabdallah and Messaoud Souilah\\  
  \textit{Dynamic Systems Laboratory (DSL),Department of Mathematics} \\  
  \textit{University of Science and Technology Houari Boumediene} \\  
  \textit{Algiers, Algeria} \\  
  \texttt{sbenabdallah@usthb.dz} \\[1em]  
  \\
  \textit{Dynamic Systems Laboratory (DSL),Department of Mathematics} \\  
  \textit{University of Science and Technology Houari Boumediene} \\  
  \textit{ Algiers, Algeria} \\  
  \texttt{msouilah@usthb.dz}  
}
\date{\today}
\begin{document}

\maketitle

\abstract{In this work, we present a result on the local existence and uniqueness of solutions to nonlinear Partial Differential-Algebraic Equations (PDAEs). By applying established theoretical results, we identify the conditions that guarantee the existence of a unique local solution. The analysis relies on techniques from functional analysis, semi-group theory, and the theory of differential-algebraic systems. Additionally, we provide applications to illustrate the effectiveness of this result.}

\maketitle
\noindent
\\
\textbf{MSC 2020:} 47B01, 47F05, 47H20, 47N60.

\section{Introduction} 

  In this paper, we study a specific type of partial differential-algebraic equations (PDAEs) defined as follows:
\begin{equation}
\left\{
\begin{array}{ll}
V_t(t,x) = F(x, t, U(t,x)), & \\
0 = G(x, t, U(t,x)), & \\
V(0, x) = V_0(x), &
\end{array}
\right.
\label{eq1}
\end{equation}

where $ U = (V, w) $ is an unknown function depending on the spatial variable $ x \in \Omega $ and time $ t $, with $ \Omega \subset \mathbb{R}^n $.

This type of equation models numerous physical and chemical phenomena. For example, the Navier-Stokes equations in fluid mechanics \cite{15} and the transport of ions in electrolytes \cite{14} fall within this framework. For a broader overview of partial differential-algebraic equations (PDAEs), we refer the reader to \cite{2}, \cite{9}, \cite{11}, \cite{16}, and \cite{17}.

In this work, we focus specifically on a class of PDAEs that can be formulated as a problem in a Banach space $ \mathcal{X} $, enabling us to apply existing results to establish the existence and uniqueness of solutions.

Let $ (\mathcal{E}, \| \cdot \|_{\mathcal{E}}) $, $ (\mathcal{S}, \| \cdot \|_{\mathcal{S}}) $, and $ (\mathcal{H}, \| \cdot \|_{\mathcal{H}}) $ be three Banach spaces, and define $ \mathcal{X} = \mathcal{E} \times \mathcal{S} $ with the norm

\[
\| U \|_{\mathcal{X}} = \| V \|_{\mathcal{E}} + \| w \|_{\mathcal{S}},
\]

where $ U = (V, w) $, $ V \in \mathcal{E} $, and $ w \in \mathcal{S} $.

If $ G(t,x, U(t,x)) = \mathcal{L}(w(t,x)) - \mathcal{G}(t,x, V(t)) $, we can rewrite the initial-boundary-value problem \eqref{eq1} as an abstract problem in Banach space $ \mathcal{X} $:

\begin{equation}
\left\{
\begin{array}{ll}
V_t(t) = \mathcal{A}(V(t)) + \mathcal{F}(t, U(t)), & \\
0 = \mathcal{L}(w(t)) - \mathcal{G}(t, V(t)), & t \geq 0, \\
V(0) = V_0, &
\end{array}
\right.
\label{eq2}
\end{equation}

where $ \mathcal{A} : \mathcal{E} \to \mathcal{E} $, $ \mathcal{F} : \mathbb{R}_+ \times \mathcal{X} \to \mathcal{E} $, $ \mathcal{L} : \mathcal{S} \to \mathcal{H} $, and $ \mathcal{G} : \mathbb{R}_+ \times \mathcal{E} \to \mathcal{H} $.

By a solution, we mean the functions $ V(t) $ and $ w(t) $, taking values in $ \mathcal{E} $ and $ \mathcal{S} $, respectively, such that $ V(t) $ and $ w(t) $ are continuous for $ t \geq 0 $.

This paper is organized as follows. Section 1 presents the preliminaries necessary for understanding the problem. In Section 2, we establish the existence and uniqueness theorems for the local solution of a class of partial differential-algebraic equations (PDAEs). Finally, Section 3 illustrates our theoretical results through concrete applications.
\section{Preliminary}
To make it easier for the reader, we will introduce the Banach spaces $\mathcal{X}$ and explain the key concepts and theorems that we will use.

\begin{theorem} \label{T1}\cite{1}
 Let $\mathcal{F}:[0,+\infty[\times \mathcal{X} \longrightarrow \mathcal{X}$ be continuous in $t$ for $t\geq 0$ and locally Lipchitz continuous in $U$, uniformly in $t$ on bounded intervals. If $\mathcal{A}$ is the
infinitesimal generator of a $C_0$ semi-group $T(t)$ on $\mathcal{X}$ then for every $U_0 \in  \mathcal{X}$ there is a $t _{max}\leq +\infty$ such that the initial value problem
\begin{equation}
\left\{
\begin{array}{lcc}
\frac{dU(t)}{dt}=\mathcal{A}U(t)+\mathcal{F}(U(t)), &  &  \\ 
&  & \ t\geq 0, \\ 
U(0)=U_{0}, &  & 
\end{array}
\right.  \label{Tp2}
\end{equation}
has a unique mild solution $U$ on $[0,t_{max}[$. Moreover, if $t _{max} \leq +\infty$ then
$$\lim_{t\rightarrow t_{max}}\|U(t)\|_{\mathcal{X}}=+\infty.$$
\end{theorem}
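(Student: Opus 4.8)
The plan is to recast the Cauchy problem \eqref{Tp2} as a fixed-point equation built from the variation-of-constants (mild solution) formula, solve it locally by the Banach contraction principle, and then pass to a maximal interval by a continuation argument. Since $\mathcal{A}$ generates a $C_0$-semigroup $T(t)$, there exist constants $M \geq 1$ and $\omega \in \mathbb{R}$ with $\|T(t)\| \leq M e^{\omega t}$, so $T$ is uniformly bounded on every compact time interval. By definition a mild solution on $[0,\tau]$ is a continuous $U$ satisfying
$$U(t) = T(t)U_0 + \int_0^t T(t-s)\,\mathcal{F}(s,U(s))\,ds,$$
so it suffices to find a fixed point of the operator $\Phi$ given by the right-hand side.

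First I would fix $R > 0$ and work in the complete metric space
$$\mathcal{B} = \Big\{ U \in C([0,\tau];\mathcal{X}) : \sup_{0 \le t \le \tau}\|U(t) - T(t)U_0\|_{\mathcal{X}} \le R \Big\}$$
equipped with the supremum norm. On the bounded set traversed by such $U$, let $L$ be the (uniform in $t$) local Lipschitz constant of $\mathcal{F}$ and $K$ a bound for $\|\mathcal{F}(s,U(s))\|_{\mathcal{X}}$; continuity of $\mathcal{F}$ in $t$ together with strong continuity of $T$ makes $\Phi U$ continuous. Choosing $\tau$ so small that $\tau M e^{|\omega|\tau} K \le R$ gives $\Phi(\mathcal{B}) \subseteq \mathcal{B}$, and shrinking $\tau$ further so that $q := \tau M e^{|\omega|\tau} L < 1$ yields the contraction estimate $\|\Phi U_1 - \Phi U_2\| \le q\,\|U_1 - U_2\|$; here the uniform local Lipschitz hypothesis is precisely what is needed. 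Banach's theorem then produces a unique fixed point, i.e. a unique mild solution on $[0,\tau]$.

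To reach the maximal interval, I would let $t_{max}$ be the supremum of all $\tau$ admitting a mild solution; uniqueness on overlaps lets the local pieces be glued into a single solution on $[0,t_{max})$. The hard part will be the blow-up alternative, which I would prove by contradiction: assume $t_{max} < \infty$ yet $\|U(t)\|_{\mathcal{X}}$ stays bounded, say $\|U(t)\| \le C$, as $t \to t_{max}$. Using the integral representation and the bounds on $\mathcal{F}$ over $\{\|U\| \le C+1\}$, one checks that $U$ is uniformly continuous near $t_{max}$, so $\lim_{t \to t_{max}} U(t)$ exists in $\mathcal{X}$. Restarting the local existence step from this limit, and crucially observing that the length of the interval furnished by the contraction depends only on the a priori bounds $R, K, L$ and not on the particular datum, extends the solution strictly past $t_{max}$ — contradicting maximality. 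Hence $t_{max} < \infty$ forces $\lim_{t \to t_{max}} \|U(t)\|_{\mathcal{X}} = +\infty$, the asserted dichotomy. The one genuinely delicate point is establishing the existence of the limit $U(t_{max})$ in the Banach space $\mathcal{X}$, since this relies on the equicontinuity coming from the semigroup bound rather than on any compactness.
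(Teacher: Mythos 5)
The paper itself offers no proof of this theorem: it is quoted verbatim from Pazy \cite{1} (Theorem 6.1.4 there) as a preliminary tool. So your proposal can only be compared with the standard argument, and in its main structure it reproduces that argument correctly: the variation-of-constants fixed point, the Banach contraction on a ball around $T(t)U_0$ (with the uniform-in-$t$ local Lipschitz hypothesis entering exactly where you say), gluing by uniqueness to reach $[0,t_{\max})$, and the observation that the contraction time depends only on norm bounds. These steps are sound, modulo routine care that the constants $K,L$ are taken on a ball of radius $\sup_{t\le\tau}\|T(t)U_0\|_{\mathcal{X}}+R$ with, say, $\tau\le 1$ fixed in advance so the choice of $\tau$ is not circular.

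The genuine gap is in the blow-up alternative, precisely at the point you flag as delicate. You claim $U$ is uniformly continuous near $t_{\max}$ by ``equicontinuity coming from the semigroup bound.'' Writing $U(t')-U(t)=\bigl[T(t'-t)-I\bigr]U(t)+\int_t^{t'}T(t'-s)\mathcal{F}(s,U(s))\,ds$, the integral is indeed $O(t'-t)$, but the first term requires $\|[T(h)-I]x\|_{\mathcal{X}}$ to be small uniformly over the merely bounded (not precompact) set $\{U(t):t<t_{\max}\}$. For a $C_0$-semigroup this fails in general: $T(h)\to I$ only strongly, and smallness of $\|T(h)-I\|$ in operator norm would force $\mathcal{A}$ to be bounded. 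So the bound $\|T(t)\|\le Me^{\omega t}$ alone does not deliver the equicontinuity you invoke. The conclusion you want is true, but needs a different mechanism: since $g(s):=\mathcal{F}(s,U(s))$ is bounded on $[0,t_{\max})$, hence lies in $L^1(0,t_{\max};\mathcal{X})$, the map $t\mapsto\int_0^t T(t-s)g(s)\,ds$ is continuous up to and including $t=t_{\max}$ by dominated convergence (pointwise-in-$s$ strong continuity of the semigroup, dominated by $2Me^{|\omega|t_{\max}}\|g(s)\|_{\mathcal{X}}$), so the limit $U(t_{\max}^-)$ exists. Alternatively — and this is Pazy's own route — you can avoid the limit altogether: if $t_{\max}<\infty$ and $\|U(t_n)\|_{\mathcal{X}}\le C$ along some $t_n\uparrow t_{\max}$, restart the contraction at time $t_n$ from the datum $U(t_n)$; since the existence time $\delta$ depends only on $C$ and $t_{\max}$ (the uniformity you yourself isolated), choosing $t_n>t_{\max}-\delta$ extends the solution past $t_{\max}$, a contradiction. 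In short, your key uniformity observation is correct, but you deploy it one step too late; as written, the limit-existence step rests on a false equicontinuity claim and must be repaired by one of the two arguments above.
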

\begin{definition}\cite{1}
    Let $T(t)$ be a $C_{0}$ semi-group of contractions on $\mathcal{X}$. A continuous function $U : [0,T] \to \mathcal{X}$ is a mild solution to the problem \eqref{Tp2} if it satisfies
$$ U (t) = T (t) U_0 + \int_0^t T (t - s) \mathcal{F} (U (s)), ds, \quad 0 \leq t \leq T,
$$
where $\mathcal{F}$ is locally Lipchitz on $\mathcal{X}$.
\end{definition}
\section{Existence and Uniqueness}
\begin{theorem}\label{t5}
 Let $\mathcal{F}: \mathbb{R}_+ \times \mathcal{X} \to \mathcal{E}$ and $\mathcal{G}: \mathbb{R}_+ \times \mathcal{E} \to \mathcal{H}$ be continuous in $t$ for $t \geq 0$, uniformly in $t$ on bounded intervals, and locally Lipschitz continuous in $U$ and $V$, respectively. Let $\mathcal{L} : \mathcal{S} \to \mathcal{H}$ be continuous, linear, and invertible. If $\mathcal{A}$ is the infinitesimal generator of a $C_0$-semigroup $T(t)$, $t \geq 0$, on $\mathcal{E}$, then for every $V_0 \in \mathcal{E}$, there exists $t_{\max} \leq +\infty$ such that the initial value problem

\begin{equation}
\left \{\begin{array}{lll}
V_t(t) =\mathcal{A}(V(t))+\mathcal{F}(t,V(t),w(t)), \textbf{          }t\geq 0 \\
0=\mathcal{L}(w(t))-\mathcal{G}(t,V(t))  \\
V(0)= V_0.
\end{array}
\right.\label{eq22}
\end{equation}
has a unique solution $U\in \mathcal{C} ([0, t max [, \mathcal{X})$. 
\end{theorem}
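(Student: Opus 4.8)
The plan is to reduce the differential-algebraic system \eqref{eq22} to a standard semilinear abstract Cauchy problem of the form \eqref{Tp2}, to which Theorem \ref{T1} applies directly. The key observation is that the algebraic constraint $0 = \mathcal{L}(w(t)) - \mathcal{G}(t, V(t))$ can be solved explicitly for $w$. Since $\mathcal{L} : \mathcal{S} \to \mathcal{H}$ is continuous, linear, and invertible, by the bounded inverse theorem $\mathcal{L}^{-1} : \mathcal{H} \to \mathcal{S}$ is also continuous and linear. Hence the constraint is equivalent to
\begin{equation}
w(t) = \mathcal{L}^{-1}\bigl(\mathcal{G}(t, V(t))\bigr).
\label{eq:wexpr}
\end{equation}
This lets us eliminate $w$ entirely and express it as a function of $V$.

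Next I would substitute \eqref{eq:wexpr} into the differential equation for $V$. Define a new nonlinearity $\widetilde{\mathcal{F}} : \mathbb{R}_+ \times \mathcal{E} \to \mathcal{E}$ by
\begin{equation}
\widetilde{\mathcal{F}}(t, V) = \mathcal{F}\bigl(t, V, \mathcal{L}^{-1}(\mathcal{G}(t, V))\bigr).
\label{eq:Ftilde}
\end{equation}
The system \eqref{eq22} then collapses to the single evolution equation $V_t(t) = \mathcal{A}(V(t)) + \widetilde{\mathcal{F}}(t, V(t))$ with $V(0) = V_0$, which is precisely of the form treated in Theorem \ref{T1} on the space $\mathcal{E}$.

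The main work is verifying that $\widetilde{\mathcal{F}}$ inherits the hypotheses of Theorem \ref{T1}, namely continuity in $t$ for $t \geq 0$ and local Lipschitz continuity in $V$, uniformly in $t$ on bounded intervals. Continuity in $t$ follows from composing the continuous maps $\mathcal{F}$, $\mathcal{G}$, and the bounded linear operator $\mathcal{L}^{-1}$. For the Lipschitz estimate, on a bounded set I would first use the local Lipschitz property of $\mathcal{G}$ in $V$ together with the boundedness of $\|\mathcal{L}^{-1}\|$ to control $\|\mathcal{L}^{-1}(\mathcal{G}(t,V_1)) - \mathcal{L}^{-1}(\mathcal{G}(t,V_2))\|_{\mathcal{S}} \leq \|\mathcal{L}^{-1}\| \, L_{\mathcal{G}} \, \|V_1 - V_2\|_{\mathcal{E}}$; this shows the map $V \mapsto \mathcal{L}^{-1}(\mathcal{G}(t,V))$ is locally Lipschitz into $\mathcal{S}$. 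Then, since $\mathcal{F}$ is locally Lipschitz in $U = (V, w)$ with respect to the product norm $\|U\|_{\mathcal{X}} = \|V\|_{\mathcal{E}} + \|w\|_{\mathcal{S}}$, composing with the Lipschitz map $V \mapsto (V, \mathcal{L}^{-1}(\mathcal{G}(t,V)))$ yields that $\widetilde{\mathcal{F}}$ is locally Lipschitz in $V$, with the uniformity in $t$ on bounded intervals passing through because each constituent estimate is uniform on bounded $t$-intervals. The one point requiring care is to check that bounded sets in $V$ map to bounded sets in $(V,w)$, so that the local Lipschitz constant of $\mathcal{F}$ is indeed finite on the relevant region; this is immediate from \eqref{eq:wexpr} and the boundedness of $\mathcal{L}^{-1}$ and $\mathcal{G}$ on bounded sets.

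Having established these properties, I would invoke Theorem \ref{T1} with the nonlinearity $\widetilde{\mathcal{F}}$ on $\mathcal{E}$: for every $V_0 \in \mathcal{E}$ there exists $t_{\max} \leq +\infty$ and a unique mild solution $V \in \mathcal{C}([0, t_{\max}[, \mathcal{E})$. Finally, recovering $w$ via \eqref{eq:wexpr} and noting that $t \mapsto \mathcal{L}^{-1}(\mathcal{G}(t, V(t)))$ is continuous (composition of continuous maps) gives $w \in \mathcal{C}([0, t_{\max}[, \mathcal{S})$, so that $U = (V, w) \in \mathcal{C}([0, t_{\max}[, \mathcal{X})$ is the unique solution. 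Uniqueness of $U$ follows from uniqueness of $V$ together with the fact that $w$ is completely determined by $V$ through the invertibility of $\mathcal{L}$. I expect the elimination step to be conceptually straightforward; the genuine obstacle is the bookkeeping in the composite Lipschitz estimate, ensuring the uniformity in $t$ and the boundedness transfer are handled rigorously rather than glossed over.
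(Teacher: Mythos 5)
Your proposal is correct and follows essentially the same route as the paper: solve the algebraic constraint via $w(t)=\mathcal{L}^{-1}\mathcal{G}(t,V(t))$, substitute to obtain a semilinear Cauchy problem on $\mathcal{E}$ with the composite nonlinearity (the paper calls it $\mathcal{K}$, you call it $\widetilde{\mathcal{F}}$), verify its continuity in $t$ and local Lipschitz continuity in $V$ via the bounded inverse theorem and the Lipschitz properties of $\mathcal{F}$ and $\mathcal{G}$, apply Theorem \ref{T1}, and recover $w$ by continuity. Your explicit check that bounded sets of $V$ map to bounded sets of $(V,w)$ is a point the paper's proof glosses over, but it is a refinement of the same argument, not a different one.
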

\begin{proof}
By the invertibility of $\mathcal{L}$, we have $w(t) = \mathcal{L}^{-1} \mathcal{G}(t, V(t))$, where $w(t) \in \mathcal{S}$ and $V(t) \in \mathcal{E}$ for $t \geq 0$. Substituting $w(t)$ into the first equation, we obtain   
\begin{equation}
\left \{\begin{array}{lll}
V_t(t) =\mathcal{A}(V(t))+\mathcal{F}(t,V(t),\mathcal{L}^{-1}\mathcal{G}(t,V(t)))  , \textbf{          }t\geq 0\\
   V(0)=V_0 .
   \end{array}
   \right.\label{eq3}
\end{equation}
We define the operator $\mathcal{K}$ as follows:
$$ \begin{array}{lclll}
   \mathcal{K}:&\mathbb{R}_+\times \mathcal{E}&  & \longrightarrow & \mathcal{E} \\\\
    & (t,V(t)) &  & \longmapsto & \mathcal{K}(t,V(t))= \mathcal{F}(t,V(t),\mathcal{L}^{-1}\mathcal{G}(t,V(t)))
   \end{array} $$
So, the problem \eqref{eq3} take the form 
\begin{equation}
\left \{\begin{array}{lll}
   V_t(t) =\mathcal{A}(V(t))+\mathcal{K}(t,V(t)) , \textbf{          }t\geq 0 \\
    V(0)= V_0.
   \end{array}
   \right.\label{eq4}
\end{equation}
Thus, the problem is defined on $\mathcal{E}$. To resolve problem \eqref{eq22}, it suffices to solve problem \eqref{eq4}. Since $\mathcal{E}$ is a Banach space and $\mathcal{A}$ generates a $C_0$-semigroup on $\mathcal{E}$, it suffices to prove that $\mathcal{K}$ is continuous in $t$ for $t\geq 0$, locally Lipschitz on $\mathcal{E}$, and uniformly Lipschitz in $t$ on bounded intervals.

We have $\mathcal{L}$ is continuous, linear, and invertible, and that both $\mathcal{F}$ and $\mathcal{G}$ are continuous in $t$ for $t\geq 0$, therefore, $\mathcal{G}$ is continuous in $t$ for $t\geq 0$.

Let $t'\geq 0$, $C\geq 0$ and $V_1(t),V_2(t) \in \mathcal{E}$ with $\|V_1(t)\|_{\mathcal{E}}\leq C$, $\|V_2(t)\|_{\mathcal{E}}\leq C$ and $t\in [0,t']$, so 
\begin{equation*}
 \begin{array}{lll}
   \|\mathcal{K}(t,V_1(t))-\mathcal{K}(t,V_2(t))\|_{\mathcal{E}} &=&\|F(t,V_1(t),L^{-1}\mathcal{G}(V_1(t)))-F(t,
   V_2,L^{-1}\mathcal{G}(t,V_2(t)))\|_{\mathcal{E}}. 
   \end{array}
\end{equation*}
Since $\mathcal{F}$ is locally Lipchitz in $U$ and uniformly in $t$ on bounded intervals, then there exists a positive constant 
$L(t',C)\geq 0$ that depends on $t'$ and $C$ such that
\begin{equation*}
 \begin{array}{lll}
   \|\mathcal{K}(t,V_1(t))-\mathcal{K}(t,V_2(t))\|_{\mathcal{E}} &\leq& L(t',C)\|(V_1(t),\mathcal{L}^{-1}\mathcal{G}(t,V_1(t)))-(
   V_2(t),\mathcal{L}^{-1}\mathcal{G}(t,V_2(t)))\|_{\mathcal{X}}, \\ \\
   &=&L(t',C)(\|V_1(t) -V_2(t)\|_{\mathcal{E}}+\|\mathcal{L}^{-1}\mathcal{G}(t,V_1(t))-\mathcal{L}^{-1}\mathcal{G}(t,V_2(t))\|_{\mathcal{S}}).\\\\
   \end{array}
\end{equation*}
We have that $\mathcal{L}$ is a continuous, linear, and invertible operator between two Banach spaces. Then, $\mathcal{L}^{-1}$ is also linear and continuous. Therefore, there exists a positive constant $L_1(C) \geq 0$, which depends on $C$, such that

\begin{equation*}
 \begin{array}{lll}
   \|\mathcal{K}(t,V_1(t))-\mathcal{K}(t,V_2)\|_{\mathcal{E}} &\leq& L(t',C)(\|V_1(t) -V_2(t)\|_{\mathcal{E}}+L_1(C)\|\mathcal{G}(t,(V_1(t))-\mathcal{G}(t,V_2(t)))\|_{\mathcal{H}}).
\end{array}
\end{equation*}
As $\mathcal{G}$ is locally Lipchitz in $V$ and uniformly in $t$ on bounded intervals, then there exists a positive constant $L_2(t',C)\geq 0$ that depends on $t'$ and $C$ such that
\begin{equation*}
 \begin{array}{lll}
   \|\mathcal{K}(t,V_1(t))-\mathcal{K}(t,V_2(t))\|_{\mathcal{E}} &\leq& L(t',C)(\|V_1(t) -V_2(t)\|_{\mathcal{E}}+L_1(C)L_2 (t',C)\|V_1(t)-V_2(t)\|_{\mathcal{E}}),\\\\
   &\leq & L(t',C)(1+L_1(t',C)L_2 (t',C)) \|V_1(t)-V_2(t)\|_{\mathcal{E}}.
\end{array}
\end{equation*}
Then,
$$\|\mathcal{K}(t,V_1(t))-\mathcal{K}(t,V_2(t))\|_{\mathcal{E}} \leq L_3(t',C)\|V_1(t)-V_2(t)\|_{\mathcal{E}}.$$
where $L_3(t',C)= L(t',C)(1+L_1(C)L_2 (t',C))$.
So, $\mathcal{K}$ is continuous in $t$ for $t\geq 0$ and locally Lipchitz in $\mathcal{E}$ and uniformly in $t$ in bounded intervals. Therefore, by Theorem \ref{T1}, there exists a positive $t_{max}\leq +\infty $ such that problem \eqref{eq4} admits a unique mild solution $V\in\mathcal{C}([0,t_{max}[,\mathcal{E})$. Moreover, if $t_{max}<+\infty$ then
$$\lim_{t \to t_{max}} \|V (t)\|_0=+\infty.$$
By the continuity of $\mathcal{L}^{-1}$ and $\mathcal{G}$, there exists a unique $w$ that satisfies
$$w(t)=\mathcal{L}^{-1}\mathcal{G}(t,V(t)) \textit{,    } w\in \mathcal{C}([0,t_{max}[,\mathcal{S}).$$
Therefore, problem \eqref{eq22} has a unique solution $U=(V,w)$  that satisfies
$$U \in \mathcal{C}([0,t_{max}[,\mathcal{X}).$$
\end{proof}

\section{Application }
      In this section, we apply the theorem to non-linear problem.

Let $\mathcal{P'}$ be a nonlinear problem defined as follows.
   \begin{equation}  \label{ex2}
\tag{$\mathcal{P'}$}
\left \{ 
\begin{array}{lcc}
u_{t}=u_{xx}+wv+uw, &  &  \\ 
v_{t}= v_{xx}+wv+u, &  & t\geq 0 \textit{  and  } x\in \Omega,\\ 
0= w_{xxxx}+w+u+v,&  & 
\end{array}
\right.
\end{equation}
where $\Omega$ is a bounded open set in $\mathbb{R}$ and $U=(u,v,w)$ is an unknown function.

The initial conditions are defined as follows:
$$
 u(0,x)=u_{0}\ ,\ v(0,x)=v_{0}, 
$$
and boundary conditions 
$$u_x|_{\partial\Omega}= v_x|_{\partial\Omega}=0.$$

 We introduce the Hilbert spaces
 $$\mathcal{X}=L^2(\Omega)\times L^2(\Omega)\times H^2_0(\Omega) \textit {   and   } \mathcal{E}=L^2(\Omega)\times L^2(\Omega),$$
 with the inner products given by
 $$ \langle U_1,U_2\rangle_{\mathcal{X}}=\int_{\Omega } u_1  u_2+\int_{\Omega } v_1  v_2+\int_{\Omega } w_{1xx} w_{2xx} \textit{    and    } \langle V_1,V_2\rangle_{\mathcal{E}}=\int_{\Omega }u_1 u_2+\int_{\Omega } v_1 v_2,$$ and the norms $$\|U\|_{\mathcal{X}}=\left(\int_{\Omega } u^2+\int_{\Omega } v^2+\int_{\Omega } w^2_{xx}\right) ^{\frac{1}{2}}\textit{    and    } \|U\|_{\mathcal{E}}=\left(\int_{\Omega } u^2+\int_{\Omega } v^2\right) ^{\frac{1}{2}},$$
 where $ U_1,U_2 \in \mathcal{X}$ ,$V_1,V_2 \in \mathcal{E}$ and $U_1=(V_1,w_1)$, $U_2=(V_2,w_2)$, $V_1=(u_1,v_1)$ and $V_2=(u_2,v_2).$ 
 
We reformulate the problem (\ref{ex2}) along with the initial condition of $U$ as an abstract problem in the Hilbert space 
$\mathcal{X}$ as follows:
\begin{equation}
\left \{\begin{array}{lll}
V_t(t) =\mathcal{A}(V(t))+\mathcal{F}(U(t)),& &  \\
0=\mathcal{L}(w(t))-\mathcal{G}(V(t))& & \textbf{                                        }t\geq 0 , \\
V(0)= V_0 ,
\end{array}
\right.\label{eq6}
\end{equation}
where $\mathcal{A}:\mathcal{D}(A)\subset \mathcal{E}\rightarrow \mathcal{E},$ $\mathcal{F}:\mathcal{X}\rightarrow\mathcal{E}$, $\mathcal{L}: H^2_0(\Omega)\rightarrow H^{-2}(\Omega)$ and  $\mathcal{G}:\mathcal{E}\rightarrow H^{-2}(\Omega)$ where $\mathcal{D}(\mathcal{A}) $ is a domain of $\mathcal{A}$. These applications are given in the following form:

For each $U=(V,w) \in \mathcal{X}$ such that  $V=(u,v)\in \mathcal{D} (A)$,  
\begin{equation*}
\mathcal{A}\left(V\right)=\left( 
\begin{array}{c}
 u_{xx} \\ 
 v_{xx}
\end{array}
\right),  \mathcal{F}(U)=\left( 
\begin{array}{c}
wv+uw \\ 
wv+u \\ 
\end{array}
\right) , \mathcal{L}(w)= w_{xxxx}+w
\end{equation*}
and $$\mathcal{G}(V)=u+v.$$
The domain of $\mathcal{A}$ is given by 
$$
\mathcal{D}(\mathcal{A})=H^{2}(\Omega)\cap H_0^{1}(\Omega )\times H^{2}(\Omega)\cap H_0^{1}(\Omega ).
$$
\begin{proposition}\label{p5}
The operator $\mathcal{L}: H^2_0(\Omega)\rightarrow H^{-2}(\Omega)$ is continuous, linear, and invertible.
\end{proposition}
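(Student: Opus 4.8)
The plan is to recognize $\mathcal{L}$ as the operator associated, in the weak sense, with a bounded coercive bilinear form on $H^2_0(\Omega)$, and then to invoke the Lax--Milgram theorem, which delivers bijectivity together with continuity of the inverse in a single stroke. Linearity of $\mathcal{L}(w) = w_{xxxx} + w$ is immediate. The first genuine step is to fix the functional-analytic framework. Since $H^{-2}(\Omega)$ is the dual of $H^2_0(\Omega)$, the element $\mathcal{L}(w)$ must be read as the functional acting on test functions $\varphi \in H^2_0(\Omega)$ by integration by parts, namely
$$
\langle \mathcal{L}(w), \varphi \rangle = \int_\Omega w_{xx}\,\varphi_{xx} + \int_\Omega w\,\varphi =: a(w,\varphi),
$$
where the boundary terms drop out because $w$, $\varphi$ and their first derivatives vanish on $\partial\Omega$ for elements of $H^2_0(\Omega)$.

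Next I would verify the two hypotheses of Lax--Milgram for the bilinear form $a$. Boundedness follows from Cauchy--Schwarz: $|a(w,\varphi)| \le \|w_{xx}\|_{L^2}\|\varphi_{xx}\|_{L^2} + \|w\|_{L^2}\|\varphi\|_{L^2} \le C\,\|w\|_{H^2_0}\|\varphi\|_{H^2_0}$, which simultaneously establishes that $\mathcal{L}: H^2_0(\Omega) \to H^{-2}(\Omega)$ is continuous, with $\|\mathcal{L}(w)\|_{H^{-2}} \le C\,\|w\|_{H^2_0}$. For coercivity I would write $a(w,w) = \|w_{xx}\|_{L^2}^2 + \|w\|_{L^2}^2 \ge \|w_{xx}\|_{L^2}^2$ and invoke the Poincaré inequality: since $\Omega$ is bounded and $w \in H^2_0(\Omega)$, one has $\|w\|_{L^2} \le C\|w_x\|_{L^2} \le C'\|w_{xx}\|_{L^2}$, so the seminorm $w \mapsto \|w_{xx}\|_{L^2}$ is an equivalent norm on $H^2_0(\Omega)$ --- indeed it is precisely the norm chosen for the $w$-component in the paper's definition of $\|U\|_{\mathcal{X}}$. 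Hence $a(w,w) \ge \alpha\,\|w\|_{H^2_0}^2$ for some $\alpha > 0$. Applying Lax--Milgram, for every $f \in H^{-2}(\Omega)$ there is a unique $w \in H^2_0(\Omega)$ with $a(w,\varphi) = \langle f,\varphi\rangle$ for all $\varphi$, i.e. $\mathcal{L}(w)=f$; this gives bijectivity, and the a priori bound $\|w\|_{H^2_0} \le \alpha^{-1}\|f\|_{H^{-2}}$ gives continuity of $\mathcal{L}^{-1}$. (As an alternative for the last point, once $\mathcal{L}$ is known to be a continuous linear bijection between Banach spaces, the bounded inverse theorem yields continuity of $\mathcal{L}^{-1}$ automatically.)

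The main obstacle I anticipate is the coercivity step, specifically justifying that $\|w_{xx}\|_{L^2}$ alone controls the full $H^2$ norm through the Poincaré inequality on $H^2_0(\Omega)$; this is what makes the choice of norm in the paper natural and is the only place where the boundedness of $\Omega$ and the homogeneous boundary conditions are genuinely used. Everything else --- linearity, boundedness of $a$, and the extraction of continuity of the inverse --- is routine once the duality interpretation of $w_{xxxx}$ as the functional $a(w,\cdot)$ has been correctly set up.
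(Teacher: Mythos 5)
Your proposal is correct and follows essentially the same route as the paper: both interpret $\mathcal{L}$ weakly through the bilinear form $a(w,\varphi)=\int_\Omega w_{xx}\varphi_{xx}+\int_\Omega w\varphi$, establish boundedness via Cauchy--Schwarz and Poincar\'e (giving continuity of $\mathcal{L}$) and coercivity of $a$, and then invoke Lax--Milgram for invertibility. Your write-up is in fact slightly tighter, since defining $\langle\mathcal{L}(w),\varphi\rangle$ directly as $a(w,\varphi)$ absorbs the paper's separate ``well-definedness'' step, and you additionally record the continuity of $\mathcal{L}^{-1}$ (via the coercivity bound or the bounded inverse theorem), which the paper only uses implicitly later.
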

\begin{proof}
We decompose this proof into three steps. First, we show that $\mathcal{L}$ is well-defined on $H^2_0(\Omega)$. Then, we prove that it is linear and continuous. Finally, we demonstrate that $\mathcal{L}$ is bijective.

\subsubsection*{Step 1: Show that the operator $\mathcal{L}$ is well-defined on the space $H^2_0(\Omega)$.}
Let $w\in H^2_0(\Omega)$ and $\psi  \in \mathcal{D}(\Omega)$:
\begin{equation}\label{d2}
    \begin{array}{lcl}
      |\langle \mathcal{L} (w),\psi\rangle|&=& |\langle w_{xxxx}+w,\psi\rangle|, \\\\
     &\leq & |\langle w_{xxxx},\psi\rangle|+|\langle w,\psi\rangle|, \\\\ 
          &\leq & |\langle w_{xx},\psi_{xx}\rangle|+|\langle w,\psi\rangle|, \\\\
          &\leq & \|w\|_{H^2_0(\Omega)}\|\psi\|_{H^2_0(\Omega)}+\|w\|_{L^2(\Omega)}\|\psi\|_{L^2(\Omega)},
    \end{array}
\end{equation}
hence $\mathcal{L}(w)$ is well-defined on the space $\mathcal{D}(\Omega)$ for all $w\in H^2_0(\Omega)$.

The linearity is clear, now let us show the continuity. Let $w\in H^2_0(\Omega)$ and $\psi_{n}$ be a sequence convergent to $\psi$ in $\mathcal{D}(\Omega)$, then 
\begin{equation*}
    \begin{array}{lcl}
      |\langle \mathcal{L} (w),\psi_n-\psi\rangle|&=& |\langle w_{xxxx}+w,\psi_n-\psi\rangle|, \\\\
     &\leq & |\langle w_{xxxx},\psi_n-\psi\rangle|+|\langle w,\psi_n-\psi\rangle|, \\\\ 
          &\leq & |\langle  w_{xx},(\psi_n)_{xx}-\psi_{xx}\rangle|+|\langle w,\psi_n-\psi\rangle|, \\\\
          &\leq & \|w_{xx}\|_{L^2(\Omega)}\|(\psi_n)_{xx}-\psi_{xx}\|_{L^2(\Omega)}+\|w\|_{L^2(\Omega)}\|\psi_n-\psi\|_{L^2(\Omega)},\\\\
          &\leq &C(\Omega) \left (\| w_{xx}\|_{L^2(\Omega)}\|(\psi_n)_{xx}-\psi_{xx}\|_{\infty}+\|w\|_{L^2(\Omega)}\|\psi_n-\psi\|_{\infty}\right), 
    \end{array}
\end{equation*}
 where $C(\Omega)> 0$. As $n \rightarrow \infty$, we have $$\lim\limits_{n \rightarrow \infty}|\langle\mathcal{L}(w), \psi_{n}-\psi\rangle|=0, $$
which implies that $\mathcal{L}(w)$ is continuous on $ \mathcal{D}(\Omega)$. Therefore, $\mathcal{L}(w) \in \mathcal{D'}(\Omega)$ for all $w\in H^2_0(\Omega)$.

Let $w\in H^2_0(\Omega)$, by \eqref{d2}  and Poincarre inequality, there exists a positive constant $C > 0$ such that
 \begin{equation}\label{D2}
   |\langle \mathcal{L} (w),\psi\rangle |\leq C\|w\|_{H^2_0(\Omega)}\|\psi\|_{H^2_0(\Omega)},\textit{    for all  }\psi  \in \mathcal{D}(\Omega).  
 \end{equation}
Since $\mathcal{L}(w) \in \mathcal{D}'(\Omega)$ and satisfies \eqref{D2} for all $w \in H^2_0(\Omega)$, it follows that $\mathcal{L}(w) \in H^{-2}(\Omega)$ for all $w \in H^2_0(\Omega)$. Therefore, the operator $\mathcal{L}$ is well-defined.
\subsubsection*{Step 3: Show that the operator $\mathcal{L}$ is linear and continuous.}
Let $w,v\in H^2_0(\Omega)$, then 
\begin{equation*}
    \begin{array}{lcl}
       |\langle \mathcal{L}(w), v \rangle|&=&|\langle  w_{xxxx}+w, v \rangle|,\\\\
       &=&|\int_{\Omega}w_{xxxx}v+\int_{\Omega}wv |,\\\\
       &\leq & |\int_{\Omega}w_{xx} v_{xx}|+|\int_{\Omega}wv |,\\\\
       &\leq & \|w_{xx}\|_{L^2(\Omega)}\|v_{xx}\|_{L^2(\Omega)}+\|w\|_{L^2(\Omega)}\|v\|_{L^2(\Omega)},
    \end{array}
\end{equation*}
by Poincare's inequality, there exists $C>0$ such that
$$|\langle \mathcal{L}(w), v \rangle|\leq C\|w\|_{H^2_0(\Omega)}\|v\|_{H^2_0(\Omega)}. $$
 So
 \begin{equation}
  \|\mathcal{L}(w)\|_{H^{-2}(\Omega)} = \sup_{v \in H^2_0(\Omega), \, \|v\|_{H^2_0(\Omega)} \leq 1} |\langle \mathcal{L}(w), v \rangle|\leq C \|w\|_{H^2_0(\Omega)}.   
 \end{equation}
Therefore, $\mathcal{L}$ is continuous on $H^2_0(\Omega)$.

\subsubsection*{Step 2: Show that the operator $\mathcal{L}$ is bijective.}

 Let $g\in H^{-2}(\Omega)$, such that $\mathcal{L}(w)=g$, then the equation becomes: $$ w_{xxxx}+w=g.$$ 
 Let $\phi \in H^2_0(\Omega)$, Multiplying by $\phi $ and integrating by parts, we obtain
$$
\int_{\Omega }w_{xx} \phi_{xx}+\int_{\Omega }w\phi =\int_{\Omega }g\phi.$$
We define 
\begin{equation*}
    \begin{array}{ccl}
         a:H^2_0(\Omega)\times H^2_0(\Omega)&\longrightarrow &\mathbb{R}  \\\\
         (w,v)&\longmapsto & a(w,\phi )=\int_{\Omega } w_{xx} \phi_{xx}+\int_{\Omega }w\phi
    \end{array}
\end{equation*}
 and 
 \begin{equation*}
     \begin{array}{ccl}
       l:H^2_0(\Omega)&\longrightarrow &\mathbb{R}\\\\
         \phi &\longmapsto &l(\phi)=\int_{\Omega }g\phi.
     \end{array}
 \end{equation*}
Since $a$ is bilinear, it suffices to show that $a$ is continuous and coercive.
\newline
Let $w,\phi \in H^2_0(\Omega)$ 
$$
|a(w,\phi )|=\left|\int_{\Omega } w_{xx}  \phi_{xx}+\int_{\Omega }w\phi \right|. 
$$
By Poincare's lemma, there exists $C>0$ depending only on $\Omega $ such that 
$$
|a(w,\phi )|\leq C\|w\|_{H^2_0(\Omega)}\|\phi \|_{H^2_0(\Omega)}, 
$$
then $a$ is continuous.

So, for $w\in H^2_0(\Omega)$ 
$$
a(w,w)=\left(  \int_{\Omega}  w_{xx}^{2}+\int_{\Omega }w^{2}\right) \geq \|w\|^2_{H^2_0(\Omega)}, 
$$
then $a$ is coercive.

Since $ g \in H^{-2}(\Omega) $, there exists a constant $ C > 0 $ such that  
$$
|\langle g, \psi \rangle| \leq C \|\psi\|_{H^2_0(\Omega)}, \quad \text{for all } \psi \in \mathcal{D}(\Omega).
$$  
Now, for any $ \psi \in H^2_0(\Omega) $, there exists a sequence $ (\psi_n) \subset \mathcal{D}(\Omega) $ such that $ \psi_n \to \psi $ in $ H^2_0(\Omega) $. Therefore,  
$$
|l(\psi)| = |\langle g, \psi \rangle| = \left|\langle g, \lim_{n \to \infty} \psi_n \rangle\right|.
$$ 
By the continuity of $ g $ as a functional in $ H^{-2}(\Omega) $,  
$$
|l(\psi)| \leq \lim_{n \to \infty} |\langle g, \psi_n \rangle| \leq C \lim_{n \to \infty} \|\psi_n\|_{H^2_0(\Omega)} = C \|\psi\|_{H^2_0(\Omega)}.
$$  
Hence, $ l $ is a continuous linear on $ H^2_0(\Omega) $.

Since $l$ is linear continuous and we have $a$ is bilinear, continuous, and coercive, by applying the Lax-Milgram lemma, there existe unique solution $w\in H^2_0(\Omega)$ that satisfies$$
a(w,\phi )=l(\phi ),\  \forall \phi \in {\mathcal{S}}.
$$
Therefore, $\mathcal{L}$ is invertible.
\end{proof}

\begin{proposition}\label{p6}
     The operator $\mathcal{G}:\mathcal{E} \longrightarrow \mathcal{S}$ given by $$\mathcal{G}(V)=u+v$$ is a continuous linear operator.
\end{proposition}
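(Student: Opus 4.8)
The plan is to verify the two defining properties of a bounded linear operator separately: algebraic linearity, which is immediate, and continuity, which reduces to a single norm estimate. Throughout I write $V=(u,v)\in\mathcal{E}=L^2(\Omega)\times L^2(\Omega)$ and recall that, consistently with the abstract setting and with the definitions in the application, the target space is $\mathcal{H}=H^{-2}(\Omega)$, where $u+v\in L^2(\Omega)$ is regarded as a distribution acting by $\langle u+v,\psi\rangle=\int_\Omega (u+v)\,\psi$.

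First I would check linearity. For $V_1=(u_1,v_1)$, $V_2=(u_2,v_2)$ and scalars $\lambda,\mu$, one computes $\mathcal{G}(\lambda V_1+\mu V_2)=(\lambda u_1+\mu u_2)+(\lambda v_1+\mu v_2)=\lambda(u_1+v_1)+\mu(u_2+v_2)=\lambda\mathcal{G}(V_1)+\mu\mathcal{G}(V_2)$, so $\mathcal{G}$ is linear with no further work required.

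For continuity, the plan is a two-step estimate. The first step controls the $L^2$ norm of $u+v$ by the $\mathcal{E}$ norm of $V$: by the triangle inequality $\|u+v\|_{L^2(\Omega)}\le\|u\|_{L^2(\Omega)}+\|v\|_{L^2(\Omega)}$, and by Cauchy--Schwarz in $\mathbb{R}^2$ this is bounded by $\sqrt{2}\,\bigl(\|u\|^2_{L^2(\Omega)}+\|v\|^2_{L^2(\Omega)}\bigr)^{1/2}=\sqrt{2}\,\|V\|_{\mathcal{E}}$. The second step passes from $L^2(\Omega)$ to $H^{-2}(\Omega)$ through the continuous embedding $L^2(\Omega)\hookrightarrow H^{-2}(\Omega)$: for any $\psi\in H^2_0(\Omega)$ the pairing satisfies $|\langle u+v,\psi\rangle|\le\|u+v\|_{L^2(\Omega)}\|\psi\|_{L^2(\Omega)}$, and since $\|\psi\|_{L^2(\Omega)}\le C(\Omega)\|\psi\|_{H^2_0(\Omega)}$ by Poincaré's inequality (exactly as invoked in Proposition \ref{p5}), taking the supremum over $\|\psi\|_{H^2_0(\Omega)}\le 1$ yields $\|u+v\|_{H^{-2}(\Omega)}\le C(\Omega)\|u+v\|_{L^2(\Omega)}$. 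Combining the two steps gives $\|\mathcal{G}(V)\|_{H^{-2}(\Omega)}\le\sqrt{2}\,C(\Omega)\,\|V\|_{\mathcal{E}}$, which is the desired boundedness.

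I expect no genuine obstacle here, since the assertion is essentially a boundedness estimate for an obviously linear map. The only point that demands a little care is the correct identification of the codomain and of $u+v$ as an element of $H^{-2}(\Omega)$ via the duality pairing, together with the clean justification that the inclusion $L^2(\Omega)\hookrightarrow H^{-2}(\Omega)$ is continuous. Once the norm on $H^{-2}(\Omega)$ is taken to be the dual norm over $H^2_0(\Omega)$ and Poincaré's inequality is used as in the proof of Proposition \ref{p5}, the estimate closes immediately.
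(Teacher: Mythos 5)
Your proposal is correct and follows essentially the same route as the paper: identify $u+v\in L^2(\Omega)$ with an element of $H^{-2}(\Omega)$ via the duality pairing, bound the dual norm over test functions in $H^2_0(\Omega)$ by Cauchy--Schwarz, and conclude with $\|\mathcal{G}(V)\|_{H^{-2}(\Omega)}\leq \sqrt{2}\,\|V\|_{\mathcal{E}}$ (up to the embedding constant). Your version is in fact slightly tidier than the paper's, since you check linearity explicitly and keep track of the Poincar\'e constant in the embedding $L^2(\Omega)\hookrightarrow H^{-2}(\Omega)$, which the paper's displayed estimate silently absorbs.
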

\begin{proof}
Let $V=(u,v)\in \mathcal{E}$, then $\mathcal{G}(V)=(u+v) \in L^2(\Omega)$, hence $\mathcal{G}(V)\in \mathcal{D'}(\Omega)$. By the characterization of $H^{-1}(\Omega)$, it follows that $\mathcal{G}(V)\in  H^{-1}(\Omega)$. Therefore, the operator $\mathcal{G}(V)\in  H^{-2}(\Omega)$.

\begin{equation}
  \|\mathcal{G}(V)\|_{H^{-2}(\Omega)} = \sup_{v \in H^2_0(\Omega), \, \|v\|_{H^2_0(\Omega)} \leq 1} |\langle \mathcal{G}(V), v \rangle|\leq  \|u+v\|_{L^2(\Omega)}\leq  \sqrt{2} \left(\|u\|_{L^2(\Omega)}^2+\|v\|_{L^2(\Omega)}^2\right)^{\frac{1}{2}}.
 \end{equation}
  $\mathcal{G}$ is a continuous linear operator. 
\end{proof}

 Since the domain of $\mathcal{A}$ is dense, it suffices to show that it is dissipative and maximal.

\begin{proposition}\label{p7}
The operator $\mathcal{A}$ is dissipative, that is ${Re}\langle
\mathcal{A}V,V\rangle_{\mathcal{E}}\leq 0.$
\end{proposition}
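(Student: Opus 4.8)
The plan is to compute $\langle \mathcal{A}V, V\rangle_{\mathcal{E}}$ directly from the definitions and then reduce it via integration by parts. Since $\mathcal{A}V = (u_{xx}, v_{xx})$ and the inner product on $\mathcal{E} = L^2(\Omega)\times L^2(\Omega)$ decouples into its two components, for $V=(u,v)\in\mathcal{D}(\mathcal{A})$ I would write
\begin{equation*}
\langle \mathcal{A}V, V\rangle_{\mathcal{E}} = \int_\Omega u_{xx}\, u + \int_\Omega v_{xx}\, v.
\end{equation*}

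Next I would integrate by parts in each integral (Green's first identity on the interval $\Omega\subset\mathbb{R}$). Because $V\in\mathcal{D}(\mathcal{A}) = \bigl(H^2(\Omega)\cap H_0^1(\Omega)\bigr)^2$, the trace of $u$ on $\partial\Omega$ vanishes, so the boundary term $[u_x\,u]_{\partial\Omega}$ disappears, and similarly for $v$. This gives
\begin{equation*}
\int_\Omega u_{xx}\, u = -\int_\Omega u_x^2, \qquad \int_\Omega v_{xx}\, v = -\int_\Omega v_x^2,
\end{equation*}
whence $\langle \mathcal{A}V, V\rangle_{\mathcal{E}} = -\int_\Omega u_x^2 - \int_\Omega v_x^2 \leq 0$. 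Taking the real part is trivial in the real-valued setting (and harmless in the complex one, where each term simply becomes $-\int_\Omega |u_x|^2$), so the dissipativity inequality $\mathrm{Re}\,\langle \mathcal{A}V, V\rangle_{\mathcal{E}}\le 0$ follows at once.

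The only step that requires genuine care is the vanishing of the boundary contributions, which rests entirely on the $H_0^1$ boundary condition built into $\mathcal{D}(\mathcal{A})$ together with the $H^2$ regularity that legitimizes the integration by parts. I do not expect this to present a real obstacle: once the boundary term is discarded, the conclusion is immediate from the nonnegativity of $\int_\Omega u_x^2$ and $\int_\Omega v_x^2$. The result then feeds directly into the subsequent maximal-dissipativity argument needed to invoke the Lumer–Phillips theorem and conclude that $\mathcal{A}$ generates a $C_0$-semigroup on $\mathcal{E}$, as required by Theorem \ref{t5}.
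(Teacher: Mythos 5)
Your proof is correct and follows essentially the same route as the paper: compute $\langle \mathcal{A}V,V\rangle_{\mathcal{E}}$ componentwise, integrate by parts using the $H_0^1$ boundary conditions in $\mathcal{D}(\mathcal{A})$ to kill the boundary terms, and conclude from the nonnegativity of $\int_\Omega u_x^2$ and $\int_\Omega v_x^2$. In fact, you make explicit the vanishing of the boundary contributions, which the paper's proof uses silently.
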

\begin{proof}
Let $V=(u,v)^{T}\in \mathcal{D}(\mathcal{A})$, then 
\[
\begin{array}{ccl}
\langle\mathcal{A}V,V\rangle_{\mathcal{E}} & = & \int_{\Omega } 
 u_{xx} u+\int_{\Omega } v_{xx}  v,\\ 
&  &  \\ 
& = & -\int_{\Omega }( u_{x})^2-\int_{\Omega }( v_{x})^2, \\ 
&  &  \\ 
& \leq & 0.
\end{array}
\]
So $\mathcal{A}$ is dissipative.
\end{proof}

\begin{proposition}\label{p8} 
The operator $\mathcal{A}$ is maximal, that is $\mathcal{R}(I-%
\mathcal{A})=\mathcal{E}.$
\end{proposition}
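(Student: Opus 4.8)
The plan is to show that $I-\mathcal{A}$ maps $\mathcal{D}(\mathcal{A})$ onto all of $\mathcal{E}=L^2(\Omega)\times L^2(\Omega)$. Fix an arbitrary $(f,g)\in\mathcal{E}$; I must produce $V=(u,v)\in\mathcal{D}(\mathcal{A})=\bigl(H^{2}(\Omega)\cap H_0^{1}(\Omega)\bigr)^2$ with $(I-\mathcal{A})V=(f,g)$. Writing out the action of $\mathcal{A}$, this reduces to the two decoupled scalar problems $u-u_{xx}=f$ and $v-v_{xx}=g$, each subject to the homogeneous boundary condition encoded in $H_0^{1}(\Omega)$. Since the two equations share the same structure and do not interact, it suffices to solve $u-u_{xx}=f$ for $u\in H^{2}(\Omega)\cap H_0^{1}(\Omega)$ and argue identically for $v$.

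First I would pass to the weak formulation: seek $u\in H_0^{1}(\Omega)$ such that $\int_{\Omega}u\phi+\int_{\Omega}u_x\phi_x=\int_{\Omega}f\phi$ for every $\phi\in H_0^{1}(\Omega)$. Introduce the bilinear form $a(u,\phi)=\int_{\Omega}u\phi+\int_{\Omega}u_x\phi_x$ and the functional $l(\phi)=\int_{\Omega}f\phi$. The form $a$ is exactly the inner product of $H_0^{1}(\Omega)$, hence bounded and coercive, with $a(u,u)=\|u\|_{H_0^{1}(\Omega)}^2$, and by Cauchy--Schwarz $l$ is a bounded linear functional since $|l(\phi)|\le\|f\|_{L^2(\Omega)}\|\phi\|_{L^2(\Omega)}\le\|f\|_{L^2(\Omega)}\|\phi\|_{H_0^{1}(\Omega)}$. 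The Lax--Milgram lemma, invoked just as in Proposition \ref{p5}, then yields a unique $u\in H_0^{1}(\Omega)$ solving the weak problem.

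It remains to upgrade this weak solution to the domain of $\mathcal{A}$, which is the step requiring the most care. From the weak identity, $u_x$ admits a distributional derivative satisfying $u_{xx}=u-f$ in $\mathcal{D}'(\Omega)$; since $u\in H_0^{1}(\Omega)\subset L^2(\Omega)$ and $f\in L^2(\Omega)$, the right-hand side lies in $L^2(\Omega)$, whence $u_{xx}\in L^2(\Omega)$ and therefore $u\in H^{2}(\Omega)$. Combined with $u\in H_0^{1}(\Omega)$, this places $u$ in $H^{2}(\Omega)\cap H_0^{1}(\Omega)$, the relevant component of $\mathcal{D}(\mathcal{A})$. Running the identical argument with $g$ produces $v$, so that $V=(u,v)\in\mathcal{D}(\mathcal{A})$ and $(I-\mathcal{A})V=(f,g)$. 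As $(f,g)\in\mathcal{E}$ was arbitrary, $\mathcal{R}(I-\mathcal{A})=\mathcal{E}$, which is maximality.

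The main obstacle I anticipate is the regularity bootstrap rather than solvability itself: Lax--Milgram only delivers a weak solution in $H_0^{1}(\Omega)$, and one must verify that it genuinely belongs to $H^{2}(\Omega)$ so that $\mathcal{A}V$ is even defined and $V$ lies in $\mathcal{D}(\mathcal{A})$. In the present one-dimensional setting this is immediate, because the equation directly exhibits $u_{xx}=u-f$ as an $L^2$ function; nonetheless it is the point at which the explicit description of $\mathcal{D}(\mathcal{A})$ must be checked, and together with Propositions \ref{p7} and \ref{p8} it completes the verification (via Lumer--Phillips) that $\mathcal{A}$ generates a $C_0$-semigroup on $\mathcal{E}$.
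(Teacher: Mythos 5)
Your proposal is correct and follows essentially the same route as the paper: reduce $(I-\mathcal{A})V=(f,g)$ to two decoupled scalar elliptic problems, solve each by Lax--Milgram, and recover membership in $\mathcal{D}(\mathcal{A})$ via elliptic regularity. The only difference is one of detail --- the paper simply cites ``Lax--Milgram together with the regularity of elliptic problems,'' whereas you spell out the weak formulation on $H_0^1(\Omega)$ and the one-dimensional bootstrap $u_{xx}=u-f\in L^2(\Omega)\Rightarrow u\in H^2(\Omega)$, which is exactly the step the paper leaves implicit.
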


\begin{proof}
Let $g=(g_{1},g_{2})^{T}\in \mathcal{E}$. We will prove that there exists $V\in \mathcal{D}(\Omega)$ satisfying
$$
(I-\mathcal{A})V=g\Leftrightarrow \left \{ 
\begin{array}{c}
-  u_{xx}+u=g_{1}, \\ 
\\ 
- v_{xx}+v=g_{2}.
\end{array}
\right. 
$$
By applying the Lax-Milgram lemma, along with the regularity of elliptic problems and the boundary conditions, there exists a unique $ V = (u, v) \in \mathcal{D}(\mathcal{A}) $ such that 
$$
(I - \mathcal{A})V = g.
$$
Consequently, 
$$
\mathcal{R}(I - \mathcal{A}) = \mathcal{E}.
$$
Thus, $ \mathcal{A} $ is maximal.

\end{proof}

\begin{theorem}
\label{t9} The operator $\mathcal{A}$ generates a $C_{0}$-semi-group of contractions $T(t)=e^{t\mathcal{A}}$ on $\mathcal{E}$.
\end{theorem}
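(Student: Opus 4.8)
The plan is to obtain the conclusion as a direct application of the Lumer--Phillips theorem, the standard characterization of infinitesimal generators of $C_0$-semigroups of contractions on a Hilbert space (available, e.g., in the reference \cite{1} already used in Theorem \ref{T1}). Recall its content: if $\mathcal{A}$ is a densely defined linear operator on $\mathcal{E}$ that is dissipative and satisfies the range condition $\mathcal{R}(\lambda_0 I - \mathcal{A}) = \mathcal{E}$ for some $\lambda_0 > 0$, then $\mathcal{A}$ generates a $C_0$-semigroup of contractions on $\mathcal{E}$. My strategy is therefore simply to check that the three hypotheses of this theorem have all been verified in the preceding material and then invoke it.

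First I would collect the three ingredients. The density of $\mathcal{D}(\mathcal{A})$ in $\mathcal{E} = L^2(\Omega)\times L^2(\Omega)$ was recorded just before Proposition \ref{p7}; it follows from the density of $H^2(\Omega)\cap H^1_0(\Omega)$ in $L^2(\Omega)$ and the fact that a product of dense subspaces is dense in the product space. Dissipativity, namely $\mathrm{Re}\,\langle \mathcal{A}V, V\rangle_{\mathcal{E}} \leq 0$ for all $V \in \mathcal{D}(\mathcal{A})$, is precisely the content of Proposition \ref{p7}. Finally, the range condition with $\lambda_0 = 1$, that is $\mathcal{R}(I - \mathcal{A}) = \mathcal{E}$, is exactly Proposition \ref{p8}. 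With these three facts in hand, the Lumer--Phillips theorem yields that $\mathcal{A}$ is the infinitesimal generator of a $C_0$-semigroup of contractions, which I denote $T(t) = e^{t\mathcal{A}}$.

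Since every hypothesis has already been established in the earlier propositions, no substantial obstacle remains at this stage; the proof is essentially a bookkeeping step. The only point deserving a brief remark is that the Lumer--Phillips theorem requires the range condition for just a single $\lambda_0 > 0$ rather than for all $\lambda > 0$, so that verifying $\mathcal{R}(I - \mathcal{A}) = \mathcal{E}$ alone (together with density and dissipativity) is sufficient to conclude. One may then note that, by the theorem, the contraction estimate $\|T(t)\|_{\mathcal{L}(\mathcal{E})} \leq 1$ holds for all $t \geq 0$, confirming the contraction property asserted in the statement.
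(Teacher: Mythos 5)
Your proof is correct and follows exactly the same route as the paper: it combines the density of $\mathcal{D}(\mathcal{A})$ with the dissipativity from Proposition \ref{p7} and the maximality (range condition) from Proposition \ref{p8}, and then invokes the Lumer--Phillips theorem to conclude that $\mathcal{A}$ generates a $C_0$-semigroup of contractions on $\mathcal{E}$. Your additional remarks (justifying density and noting that the range condition is needed for only one $\lambda_0>0$) are accurate refinements of the same argument rather than a different approach.
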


\begin{proof}
Since $\mathcal{D}(\mathcal{A})$ is dense in $\mathcal{E}$, from the
propositions \ref{p7} and \ref{p8} $\mathcal{A}$ is dissipative, maximal,
and by the Lumer-Phillips theorem $\mathcal{A}$ generates a $C_{0}$ semi-group of contractions in $\mathcal{E}$.
\end{proof}
  \begin{lemma}\label{L2} 
  The operator $\mathcal{F}: \mathcal{X} \longrightarrow \mathcal{E}$ given by
  $$\mathcal{F}(U)=\left( 
\begin{array}{c}
wv+wu  \\ 
wv+u \\ 
\end{array}
\right)$$
  is locally Lipschitz.
\end{lemma}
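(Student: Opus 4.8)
The plan is to estimate $\|\mathcal{F}(U_1) - \mathcal{F}(U_2)\|_{\mathcal{E}}$ on an arbitrary bounded ball of $\mathcal{X}$ and show it is controlled by $\|U_1 - U_2\|_{\mathcal{X}}$. The decisive preliminary observation is that $\Omega \subset \mathbb{R}$ is one-dimensional, so the Sobolev embedding $H^2_0(\Omega) \hookrightarrow L^\infty(\Omega)$ holds: there is a constant $C_S > 0$ with $\|w\|_{L^\infty(\Omega)} \le C_S \|w\|_{H^2_0(\Omega)}$ for every $w \in H^2_0(\Omega)$. This is the ingredient that lets me handle the quadratic products $wv$ and $wu$, since each pairs an $L^2$ factor with a factor that, through the embedding, is a uniformly bounded multiplier.

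Fix $R > 0$ and take $U_i = (u_i, v_i, w_i)$, $i = 1,2$, with $\|U_i\|_{\mathcal{X}} \le R$; in particular $\|u_i\|_{L^2}, \|v_i\|_{L^2} \le R$ and $\|w_i\|_{H^2_0} \le R$, whence $\|w_i\|_{L^\infty} \le C_S R$. Treating the two components of $\mathcal{F}(U_1) - \mathcal{F}(U_2)$ separately, I would split each product difference by the standard bilinear trick, for instance
$$w_1 v_1 - w_2 v_2 = w_1(v_1 - v_2) + (w_1 - w_2) v_2,$$
and likewise for $w_1 u_1 - w_2 u_2$. Applying Hölder's inequality in the form $\|w\phi\|_{L^2} \le \|w\|_{L^\infty}\|\phi\|_{L^2}$ to each piece and inserting the embedding bound gives
$$\|w_1 v_1 - w_2 v_2\|_{L^2} \le C_S R\,\|v_1 - v_2\|_{L^2} + C_S R\,\|w_1 - w_2\|_{H^2_0},$$
while the purely linear term $u_1 - u_2$ appearing in the second component is trivially bounded by $\|u_1 - u_2\|_{L^2}$.

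Summing the contributions of both components and bounding each of $\|u_1 - u_2\|_{L^2}$, $\|v_1 - v_2\|_{L^2}$, $\|w_1 - w_2\|_{H^2_0}$ by $\|U_1 - U_2\|_{\mathcal{X}}$, I obtain
$$\|\mathcal{F}(U_1) - \mathcal{F}(U_2)\|_{\mathcal{E}} \le L(R)\,\|U_1 - U_2\|_{\mathcal{X}},$$
with a constant of the form $L(R) = C(1 + R)$ depending only on $\Omega$ and $R$. Since $R$ is arbitrary, this establishes that $\mathcal{F}$ is locally Lipschitz on $\mathcal{X}$.

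The only genuine obstacle is the nonlinearity: because $\mathcal{F}$ is quadratic, a naive estimate would force one to multiply two $L^2$ functions, whose product need not lie in $L^2$. The point is precisely that the one-dimensional $L^\infty$-embedding of $H^2_0(\Omega)$ converts one factor of every product into an $L^\infty$ multiplier, which is exactly what closes the estimate. For completeness I would also check, by the same Hölder/embedding argument applied to $\mathcal{F}(U)$ itself rather than to a difference, that $\mathcal{F}$ indeed maps $\mathcal{X}$ into $\mathcal{E}$, i.e. that each component genuinely lies in $L^2(\Omega)$.
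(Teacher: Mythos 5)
Your proposal is correct and follows essentially the same route as the paper's proof: the same bilinear splitting $w_1v_1 - w_2v_2 = w_1(v_1-v_2) + (w_1-w_2)v_2$, the same key ingredient (the one-dimensional embedding $H^2_0(\Omega) \hookrightarrow L^\infty(\Omega)$ combined with H\"older), and the same conclusion with a Lipschitz constant depending on the radius of the ball. The only cosmetic differences are that you work with norms directly instead of squared norms, and you add the (worthwhile) remark that the same estimate shows $\mathcal{F}$ maps $\mathcal{X}$ into $\mathcal{E}$.
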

\begin{proof}
    Let $C>0$ and $U_1,U_2\in \mathcal{X}$ such that $\|U_1\|_{\mathcal{X}}\leq C $ and $\|U_2\|_{\mathcal{X}}\leq C$, then 
\begin{equation*}
\begin{array}{lcl}
            \|\mathcal{F}(U_1)-\mathcal{F}(U_2)\|^2_{\mathcal{E}}&=& \int_{\Omega}(w_1v_1-w_2v_2+u_1w_1-u_2w_2)^2+\int_{\Omega}(w_1u_1-w_2u_2+u_1-u_2)^2.
             
\end{array}
\end{equation*} 

Firstly, we analyze the term
\begin{equation*}
    \begin{aligned}
      \int_{\Omega}(w_1v_1-w_2v_2+u_1w_1-u_2w_2)^2 &\leq  2\left(\int_{\Omega}(w_1v_1-w_2v_2)^2+\int_{\Omega}(u_1w_1-u_2w_2)^2\right),
     \end{aligned}
\end{equation*}

Next, we estimate 
\begin{equation}
\begin{aligned}
     \int_{\Omega}(u_1w_1-u_2w_2)^2&=\int_{\Omega}((u_1-u_2)w_1+u_2(w_1-w_2))^2, \\
     &\leq 2\left(\int_{\Omega}((u_1-u_2)w_1)^2+\int_{\Omega}u_2(w_1-w_2)^2\right),\\
     &\leq 2\left(\int_{\Omega}((u_1-u_2)w_1)^2+\int_{\Omega}u_2(w_1-w_2)^2\right),\\
    & \leq 2\left(\|w_{1}\|^2_{\infty}\int_{\Omega}(u_1-u_2)^2+\|w_1-w_2\|^2_{\infty}\int_{\Omega}u_2^2\right),\\
\end{aligned}
    \end{equation}
  
By Poincar\'{e}'s inequality and the continuity of $ H^2(\Omega) \hookrightarrow L^\infty(\Omega) $, there exists a positive constant $ C(\Omega) $ such that
\begin{equation}\label{l1}
    \begin{aligned}
      \int_{\Omega}(u_1w_1-u_2w_2)^2 &\leq  2C(\Omega)\left( \|w_1\|^2_{H^2_0(\Omega)} \|u_2 - u_1\|^2_{L^2(\Omega)}+\|w_1-w_2\|^2_{H^2_0(\Omega)} \|u_2\|^2_{L^2(\Omega)}\right),\\
      &\leq 2C^2C(\Omega)\left( \|u_2 - u_1\|^2_{L^2(\Omega)}+\|w_1-w_2\|^2_{H^2_0(\Omega)}\right).
     \end{aligned}
\end{equation}  

Similarly, we have
\begin{equation}\label{l2}
    \int_{\Omega}(w_1v_1-w_2v_2)^2 \leq 2C^2C(\Omega)\left( \|v_2 - v_1\|^2_{L^2(\Omega)}+\|w_1-w_2\|^2_{H^2_0(\Omega)}\right).
\end{equation}

Combining \eqref{l1} and \eqref{l2}, we obtain

\begin{equation}\label{ist3}
\begin{aligned}
      \int_{\Omega}(w_1v_1-w_2v_2+u_1w_1-u_2w_2)^2 &\leq  8C^2C(\Omega)\left(\|u_2 - u_1\|^2_{L^2(\Omega)}+ \|v_2 - v_1\|^2_{L^2(\Omega)}+\|w_1-w_2\|^2_{H^2_0(\Omega)}\right),\\
      &\leq 8C^2C(\Omega)\|U_1-U_2\|_{\mathcal{X}}^2.
\end{aligned}
    \end{equation}
  
Secondly, we estimate
\begin{equation}\label{ist4}
    \begin{aligned}
      \int_{\Omega}(w_1v_1-w_2v_2+u_1-u_2)^2&\leq  2\left(\int_{\Omega}(w_1v_1-w_2v_2)^2+\int_{\Omega}(u_1-u_2)^2\right),\\
      &\leq 2\left(2C^2C(\Omega)\left( \|v_2 - v_1\|^2_{L^2(\Omega)}+\|w_1-w_2\|^2_{H^2_0(\Omega)}\right)+\|u_2 - u_1\|^2_{L^2(\Omega)}\right),\\
      &\leq 2M\|U_1-U_2\|_{\mathcal{X}}^2,
     \end{aligned}
\end{equation}

where $M=max(1,2C^2C(\Omega)).$

Combining the results \eqref{ist3} and \eqref{ist4}, we find
\begin{equation*}
\begin{array}{lcl}
            \|\mathcal{F}(U_1)-\mathcal{F}(U_2)\|^2_{\mathcal{E}}&\leq& L^2(C)\|U_1-U_2\|_{\mathcal{X}}^2,
             \end{array}
\end{equation*} 
where $L^2(C)=max(2M,8C^2 C(\Omega)).$
Therefor $$\|\mathcal{F}(U_1)-\mathcal{F}(U_2)\|_{\mathcal{E}}\leq L(C)\|U_1-U_2\|_{\mathcal{X}}.$$
This proves that $\mathcal{F}$ is locally Lipschitz continuous.
\end{proof}
\begin{theorem}
    If $ V_0 \in \mathcal{E} $, there is a $t_{max} \leq +\infty$ such that the initial value problem \eqref{ex2} has a unique solution $U\in \mathcal{C} ([0, t_{ max} [, \mathcal{X})$. 
\end{theorem}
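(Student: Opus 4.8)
The plan is to recognize the concrete system $(\mathcal{P}')$ as a particular instance of the abstract problem \eqref{eq22} and then invoke the general existence–uniqueness result of Theorem \ref{t5}. To this end I would first pin down the correspondence of data: $\mathcal{E} = L^2(\Omega) \times L^2(\Omega)$, the algebraic-variable space $\mathcal{S} = H^2_0(\Omega)$, the range space $\mathcal{H} = H^{-2}(\Omega)$, and $\mathcal{X} = \mathcal{E} \times \mathcal{S}$, together with the operators $\mathcal{A}$, $\mathcal{F}$, $\mathcal{L}$, $\mathcal{G}$ read off from the reformulation above. Once this dictionary is fixed, the whole argument reduces to verifying, one hypothesis at a time, that each standing assumption of Theorem \ref{t5} has already been established in the preceding propositions and lemmas.

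Concretely, I would assemble the verification as a checklist. The operator $\mathcal{L}: H^2_0(\Omega) \to H^{-2}(\Omega)$ is continuous, linear, and invertible by Proposition \ref{p5}, which is exactly the standing assumption on $\mathcal{L}$. By Theorem \ref{t9}, $\mathcal{A}$ is the infinitesimal generator of a $C_0$-semigroup of contractions on $\mathcal{E}$, so the semigroup hypothesis holds. The nonlinearity $\mathcal{F}: \mathcal{X} \to \mathcal{E}$ is locally Lipschitz by Lemma \ref{L2}, while $\mathcal{G}: \mathcal{E} \to H^{-2}(\Omega)$ is continuous and linear by Proposition \ref{p6}, hence bounded and therefore globally, and in particular locally, Lipschitz in $V$.

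The one point I would take care to address explicitly is that in $(\mathcal{P}')$ neither $\mathcal{F}$ nor $\mathcal{G}$ carries any explicit dependence on the time variable $t$; both are autonomous. Consequently their continuity in $t$ for $t \geq 0$, and the requirement that their local Lipschitz constants be uniform in $t$ on bounded intervals, hold trivially, since the relevant maps are constant in $t$. This is the only discrepancy between the stated form of Theorem \ref{t5} and the data at hand, and it is dispatched in a single line. I do not anticipate any genuine obstacle: all the analytic work — the Lax–Milgram argument behind the invertibility of $\mathcal{L}$, the dissipativity and maximality yielding the semigroup, and the Poincaré and Sobolev-embedding $H^2(\Omega) \hookrightarrow L^\infty(\Omega)$ estimates underlying the local Lipschitz bound on $\mathcal{F}$ — has already been carried out.

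With every hypothesis in place, I would apply Theorem \ref{t5} to the given $V_0 \in \mathcal{E}$. It furnishes a $t_{\max} \leq +\infty$ and a unique mild solution $V \in \mathcal{C}([0, t_{\max}[, \mathcal{E})$ of the reduced equation \eqref{eq4} obtained by substituting $w(t) = \mathcal{L}^{-1}\mathcal{G}(V(t))$. Since $\mathcal{L}^{-1}$ and $\mathcal{G}$ are continuous, $w \in \mathcal{C}([0, t_{\max}[, H^2_0(\Omega))$ is then uniquely determined from $V$, whence $U = (u, v, w) = (V, w)$ is the unique solution in $\mathcal{C}([0, t_{\max}[, \mathcal{X})$, which is the assertion of the theorem.
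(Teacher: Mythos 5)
Your proposal is correct and follows essentially the same route as the paper: verify the hypotheses of Theorem \ref{t5} via Proposition \ref{p5} (invertibility of $\mathcal{L}$), Proposition \ref{p6} (continuity and linearity of $\mathcal{G}$), Lemma \ref{L2} (local Lipschitz property of $\mathcal{F}$), and Theorem \ref{t9} (generation of the $C_0$-semigroup), then invoke Theorem \ref{t5} on the abstract reformulation \eqref{eq6}. Your explicit remark that the autonomous character of $\mathcal{F}$ and $\mathcal{G}$ trivially discharges the $t$-continuity and uniform-in-$t$ Lipschitz requirements is a point the paper leaves implicit, and it is a worthwhile addition.
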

\begin{proof}
By Propositions \ref{p5}, \ref{p6}, Lemma \ref{L2} and Theorem \ref{t9}, we have the following: the operator $\mathcal{L}$ is continuous, linear, and invertible, $\mathcal{G}$ is a continuous linear operator, $\mathcal{F}$ is locally Lipschitz, and $\mathcal{A}$ is the infinitesimal generator of a $C_0$-semi-group of contractions $T(t)$. Then, by Theorem \ref{t5}, there exists $t_{max} \leq +\infty$ such that the initial value problem \eqref{eq6} has a unique solution $U\in \mathcal{C} ([0, t_{ max} [, \mathcal{X})$, and this solution satisfies the problem \eqref{ex2}. Therefore, $U$ is also a solution to \eqref{ex2}.
\end{proof}

\end{document}